\setlist[itemize]{topsep=0ex,itemsep=0ex,parsep=0ex}
\setlist[enumerate]{topsep=0ex,itemsep=0ex,parsep=0ex}
\crefname{lem}{Lemma}{Lemmas}
\crefname{thm}{Theorem}{Theorems}
\crefname{ques}{Question}{Theorems}
\crefname{cor}{Corollary}{Corollaries}
\crefname{enumi}{Item}{Items}
\newcommand{\defn}[1]{\textcolor{Maroon}{\emph{#1}}}
\def\NAT@spacechar{~}
\renewcommand{\baselinestretch}{1.1}
\renewcommand{\epsilon}{\varepsilon}
\renewcommand{\geq}{\geqslant}
\renewcommand{\leq}{\leqslant}
\DeclareMathOperator{\dist}{dist}
\DeclareMathOperator{\tw}{tw}
\DeclareMathOperator{\sep}{sep}
\newcommand{\RR}{\mathbb{R}}
\newcommand{\GG}{\mathcal{G}}
\newcommand{\NN}{\mathbb{N}}
\renewcommand{\thefootnote}{\fnsymbol{footnote}}
\theoremstyle{plain}
\newtheorem{thm}{Theorem}
\newtheorem{lem}[thm]{Lemma}
\crefname{obs}{Observation}{Observations}
\newtheorem*{lem*}{Lemma}
\theoremstyle{definition}
\newtheorem{conj}[thm]{Conjecture}
\newtheorem*{conj*}{Conjecture}
\date{}
\begin{document}

\title{\bf\fontsize{18pt}{18pt}\selectfont Coarse Balanced Separators and Tree-Decompositions}

\author{Maria Chudnovsky\,\footnotemark[1] \quad Robert Hickingbotham\,\footnotemark[2]}

\footnotetext[1]{Princeton University, Princeton, NJ, USA. Supported by NSF Grant DMS-2348219 and by AFOSR grant FA9550-22-1-0083.}

\footnotetext[2]{CNRS, ENS de Lyon, Université Claude Bernard Lyon 1, LIP, UMR 5668, Lyon, France.}

\maketitle
\begin{abstract}
  A classical result of Robertson and Seymour (1986) states that the treewidth of a graph is linearly tied to its separation number: the smallest integer $k$ such that, for every weighting of the vertices, the graph admits a balanced separator of size at most $k$. Motivated by recent progress on coarse treewidth, Abrishami, Czy{\.{z}}ewska, Kluk, Pilipczuk,  Pilipczuk, and Rz{\k{a}\.{z}}ewski (2025) conjectured a coarse analogue to this result: every graph that has a balanced separator consisting of a bounded number of balls of bounded radius is quasi-isometric to a graph with bounded treewidth. In this paper, we confirm their conjecture for $K_{t,t}$-induced-subgraph-free graphs when the separator consists of a bounded number of balls of radius $1$. In doing so, we bridge two important conjectures concerning the structure of graphs that exclude a planar graph as an induced minor.
\end{abstract}

\renewcommand{\thefootnote}{\arabic{footnote}}

\section{Introduction}
Coarse graph theory is an emerging field that explores the global structure of graphs through the lens of Gromov’s coarse geometry.\footnote{See \cref{SectionPrelim} for undefined terms.} Initiated by \citet{georgakopoulos2023graph}, this area seeks to understand the global structure of graphs by viewing them from afar. In this paper, we continue this line of research by developing coarse analogues of two fundamental concepts: balanced separators and tree-decompositions.

We begin with the definition of treewidth. A \defn{$T$-decomposition} $\mathcal{T}={(T,\beta)}$ of a graph $G$ is a pair where $T$ is a tree and ${\beta \colon V(T) \to 2^{V(G)}}$ is a function such that:
 \begin{itemize}
     \item for every edge ${uv \in E(G)}$, there exists a node ${x \in V(T)}$ with ${u,v \in \beta(x)}$; and 
     \item for every vertex ${v \in V(G)}$, the set $\{ x \in V(T) \colon v \in \beta(x) \}$ induces a non-empty connected subtree of~$T$. 
 \end{itemize}
We call $\beta(t)$ a \defn{bag} of the $T$-decomposition. The \defn{width} of ${(T,\beta)}$ is ${\max\{ \lvert \beta(x)\rvert \colon x \in V(T) \}-1}$. The \defn{treewidth}~$\tw(G)$ of $G$ is the minimum width of a $T$-decomposition of~$G$ for any tree $T$. Treewidth is an important parameter in algorithmic and structural graph theory that measures how similar a graph is to a tree; see the survey \cite{HW2017tied}.

Recently, the second author~\cite{Hickingbotham2025treewidth} and \citet{NSS2025treewidth} independently established the right notion of coarse treewidth (with optimal bounds obtained by \cite{NSS2025treewidth}). Let $G$ be a graph, and $k,r\in \NN$.  We say that a set $S\subseteq V(G)$ is \defn{$(k,r)$-centred} if $S\subseteq N_G^r[\widehat{S}]$ for some set $\widehat{S}\subseteq V(G)$ with $|S|\leq k$. Note that distances are measured in the underlying graph $G$, not by the subgraph induced by $S$. A tree-decomposition of $G$ is \defn{$(k,r)$-centred} if each bag of the tree-decomposition is $(k,r)$-centred. 
  
\begin{thm}[\cite{NSS2025treewidth,Hickingbotham2025treewidth}]\label{CoarseTreewidthCharacterisation}
    For all $k,r\in \NN$, there exists $q\in \NN$ such that every graph that has a $(k,r)$-centred tree-decomposition is $q$-quasi-isometric to a graph with treewidth at most $k$.
\end{thm}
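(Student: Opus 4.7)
The plan is, given a $(k,r)$-centred tree-decomposition $(T, \beta)$ of $G$ with witness sets $\widehat{S}_x$ (of size at most $k$) satisfying $\beta(x) \subseteq N_G^r[\widehat{S}_x]$, to build a coarser graph $H$ on the union $W := \bigcup_{x \in V(T)} \widehat{S}_x$ of all witnesses, together with a projection realising a quasi-isometry $G \to H$. First I would arrange, without loss of generality, that $\widehat{S}_x \subseteq \beta(x)$ for every $x \in V(T)$: if not, add each $\widehat{S}_x$ to $\beta(x)$ and propagate these additions along paths of $T$ to preserve the tree-decomposition axioms; this only enlarges bags and preserves the $(k,r)$-centred property.

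Next I would define $H$ by $V(H) = W$ and $uv \in E(H)$ whenever $u, v \in W$ satisfy $d_G(u, v) \leq 2r + 1$, and define a projection $\phi \colon V(G) \to W$ sending each $v$ to some $s \in W$ with $d_G(v, s) \leq r$ (such $s$ exists because $v$ lies in $N_G^r[\widehat{S}_x]$ for some bag $x$). A routine check then gives that $\phi$ is a quasi-isometry with constants depending only on $r$: the image $W$ is $r$-dense in $V(G)$, consecutive projections along any $G$-path land at $G$-distance at most $2r+1$ (and hence at $H$-distance at most $1$), and each edge of $H$ lifts to a $G$-path of length at most $2r+1$.

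The bulk of the argument is to exhibit a tree-decomposition of $H$ of width at most $k$. The natural candidate is $(T, \widehat{\beta})$ with $\widehat{\beta}(x) := \widehat{S}_x$, giving bag size at most $k$, but both tree-decomposition axioms---edge coverage and the subtree property---may fail. A key ingredient should be the standard observation that an $H$-edge $uv$ with $u \in \widehat{S}_x$ and $v \in \widehat{S}_y$ forces $x, y$ to be at bounded $T$-distance (a short $G$-path cannot drift too far across $T$ inside a tree-decomposition); one can then attempt to repair both axioms locally by enlarging each $\widehat{\beta}(x)$ to include the witnesses from bags in a bounded $T$-neighbourhood of $x$ and from the subtrees of $T$ spanned by the witnesses currently in $\widehat{S}_x$.

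The hard part will be controlling the size of these enlarged bags. Naively the blow-up is unbounded, since many distinct witnesses can have spanning subtrees all meeting a common node $z$, forcing $\widehat{\beta}(z)$ to swell. Matching the advertised bound of $k$ will, I expect, require either a canonical, greedy choice of the witness sets $\widehat{S}_x$ ensuring that each witness appears as such in a connected subtree of $T$, or a global rearrangement of the decomposition that trades edge-subdivisions of $T$ against bag sizes to keep the width at $k$. The quasi-isometry part is routine once $H$ and the decomposition are in place; all the delicate work lies in the simultaneous control of the two tree-decomposition axioms.
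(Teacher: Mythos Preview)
The paper does not prove this theorem; it is quoted from \cite{NSS2025treewidth,Hickingbotham2025treewidth} and used only as a black box, so there is no in-paper argument to compare your proposal against.

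On its own merits, your sketch has a genuine gap. The ``key ingredient'' you rely on --- that $u\in\widehat{S}_x$, $v\in\widehat{S}_y$ and $d_G(u,v)\le 2r+1$ force $d_T(x,y)$ to be bounded --- is false. Take $G$ to be the star with centre $u$ and leaves $v,a_1,\dots,a_n$, and the path tree-decomposition $T=x_0x_1\cdots x_n$ with $\beta(x_0)=\{u,v\}$ and $\beta(x_i)=\{u,a_i\}$. Choosing witnesses $\widehat{S}_{x_0}=\{v\}$ and $\widehat{S}_{x_i}=\{a_i\}$ gives a $(1,1)$-centred decomposition with $\widehat{S}_x\subseteq\beta(x)$ throughout; yet $d_G(v,a_n)=2\le 3$ while $d_T(x_0,x_n)=n$. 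Worse, your graph $H$ on $W=\{v,a_1,\dots,a_n\}$ is the complete graph $K_{n+1}$, so $\tw(H)=n$. Thus the very graph $H$ you build can have unbounded treewidth, and no amount of local repair of $(T,\widehat\beta)$ will help.

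What this example shows is that the choice of witnesses is not a ``without loss of generality'' step but the heart of the matter: one must select the centres (equivalently, a partition of $V(G)$ into connected bounded-radius parts) in a way that is globally compatible with the tree structure \emph{before} defining $H$. Your final paragraph senses this, but the repair strategy you outline --- padding $\widehat\beta(x)$ with witnesses from nearby $T$-nodes --- is targeting the wrong object once one sees that $H$ itself is the problem.
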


Quasi-isometry is a fundamental notion from metric geometry which captures when two metric spaces share the same global structure. It is straightforward to show the converse to \cref{CoarseTreewidthCharacterisation} hold: that every graph quasi-isometric to a graph with treewidth at most $k$ has a $(k+1,r)$-centred tree-decomposition (for an appropriate choice of $r$). Thus, \cref{CoarseTreewidthCharacterisation} exactly characterises ``coarse treewidth.'' Given that we now have a robust measure of coarse treewidth, it raises the question as to which properties of treewidth can be lifted to this setting.

A classical result of \citet{robertson1986algorithmic} ties treewidth to the existence of small balanced separators. For a graph $G$ and vertex-weight function $\mu\colon V(G)\to \RR_{\geq 0}$, we say that a set $S$ is a \defn{balanced separator} for $\mu$ if, for every connected component $C$ of $G-S$, the total weight of vertices within $C$ is at most half the total weight of vertices within $G$. If $\mu$ is an indicator function for a set $X\subseteq V(G)$, then we say that $S$ is a \defn{balanced separator} for $X$. The \defn{separation number} $\sep(G)$ of $G$ is define to be the minimum $k\in \NN$ such that, for every $\mu\colon V(G)\to \RR_{\geq 0}$, there is a balanced separator $S$ for $\mu$ with size at most $k$. A standard argument shows that $\sep(G)\leq \tw(G)+1$ for every graph $G$. Conversely, \citet{robertson1986algorithmic} proved that the treewidth of a graph is bounded from above by a linear function of its separation number.

\begin{thm}[\cite{robertson1986algorithmic}]\label{BalancedSep}
    For every graph $G$, $\tw(G)\leq 4\sep(G).$
\end{thm}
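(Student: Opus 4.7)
The plan is to prove a strengthened inductive statement: setting $k := \sep(G)$, for every set $W \subseteq V(G)$ with $|W| \leq 2k+1$, the graph $G$ admits a tree-decomposition of width at most $3k$ with some bag containing $W$. The theorem then follows by taking $W = \emptyset$, since $3k \leq 4k = 4\sep(G)$. I proceed by induction on $|V(G)|$. If $|V(G)| \leq 3k+1$, a single bag equal to $V(G)$ satisfies all conditions, so assume $|V(G)| \geq 3k+2$.

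Extend $W$ arbitrarily to a set $W^+ \subseteq V(G)$ with $|W^+| = 2k+1$. Applying the definition of $\sep(G)$ to the indicator weighting of $W^+$ yields a set $S \subseteq V(G)$ with $|S| \leq k$ such that $|V(C) \cap W^+| \leq k$ for every component $C$ of $G - S$. For each such component $C$, define $W_C := (V(C) \cap W^+) \cup S$, so that $|W_C| \leq 2k \leq 2k+1$, and apply the induction hypothesis to $G[V(C) \cup S]$ with boundary $W_C$, obtaining a tree-decomposition $(T_C, \beta_C)$ of width at most $3k$ containing a bag $x_C$ with $W_C \subseteq \beta_C(x_C)$. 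Assemble a tree-decomposition $(T, \beta)$ of $G$ by introducing a new root node $x_0$ with $\beta(x_0) := W^+ \cup S$ (of size at most $3k+1$) and joining $x_0$ to each $x_C$ by an edge. The tree-decomposition axioms are then routine to verify: every edge of $G$ is covered (either both endpoints lie in some $V(C) \cup S$ and the edge is handled by $(T_C, \beta_C)$, or both endpoints lie in $S \subseteq \beta(x_0)$), and the bags containing any given vertex $v$ form a connected subtree, since $S \cup (V(C) \cap W^+) \subseteq \beta(x_0) \cap \beta_C(x_C)$ ensures the attachment point $x_C$ is in the subtree whenever $v$ appears in both halves.

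The main obstacle---and the only nontrivial point---is ensuring that the recursion terminates, i.e., $|V(C) \cup S| < |V(G)|$ for every component $C$, equivalently that $G - S$ has at least two components. This is why the threshold $|W^+| = 2k+1$ is chosen delicately: each component meets $W^+$ in at most $k$ vertices, but $|W^+ \setminus S| \geq (2k+1) - k = k+1$, so the vertices of $W^+ \setminus S$ cannot all lie in a single component, forcing $G - S$ to split. A smaller threshold would not guarantee this splitting, while a larger one would push the root bag $W^+ \cup S$ past the width budget $3k+1$. Everything else in the argument is arithmetic bookkeeping.
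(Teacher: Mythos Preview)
The paper does not give a proof of this theorem; it is stated as a classical result of Robertson and Seymour with a citation only. Your argument is the standard one (and in fact yields the sharper bound $\tw(G)\leq 3\sep(G)$), so in spirit it is exactly what is being invoked.

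There is one small but genuine gap in your write-up. You phrase the strengthened claim with $k:=\sep(G)$ and then induct on $|V(G)|$. When you recurse to $G':=G[V(C)\cup S]$, the inductive hypothesis as you have stated it would use $k':=\sep(G')$, which may be strictly smaller than $k$; then the precondition $|W_C|\leq 2k'+1$ need not follow from your bound $|W_C|\leq 2k$. The fix is routine and standard: state the lemma for a \emph{fixed} integer $k$ and all graphs $G$ with $\sep(G)\leq k$ (this uses, and you should note, that $\sep$ is monotone under taking induced subgraphs), or equivalently induct over induced subgraphs of the original graph with $k$ held constant throughout. With that reformulation, the recursion goes through exactly as you wrote, and the rest of the argument---including your careful justification that $G-S$ has at least two components---is correct.
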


Inspired by this, \citet{ACKPPR2025Balanced} conjectured that this characterisation of treewidth has a coarse analogue. We say that a graph $G$ admits \defn{$(k,r)$-balanced separators} if, for every vertex-weight function $\mu\colon V(G)\to \RR_{\geq 0}$, there is a $(k,r)$-centred set which is a balanced separator for $\mu$. 

\begin{conj}[\cite{ACKPPR2025Balanced}]\label{MainConjecture}
    For all $k,r\in \NN$, there exist $k',r'\in \NN$ such that every graph that admits $(k,r)$-balanced separators has a $(k',r')$-centred tree decomposition.
\end{conj}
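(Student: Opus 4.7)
The plan is to mimic the classical recursive proof behind \cref{BalancedSep}: build the tree-decomposition by repeatedly extracting a $(k,r)$-centred balanced separator, placing it in a bag, and recursing on each resulting piece. Concretely, I would prove by induction on $|V(G)\setminus W|$ the following strengthened statement: there is a bound $c^\ast = c^\ast(k,r)$ such that, for every $(c^\ast, r)$-centred set $W \subseteq V(G)$, the graph $G$ admits a tree-decomposition with root bag $W$ in which every bag is $(c^\ast+k, r)$-centred. In the inductive step, one chooses a weighting $\mu$, applies the hypothesis to obtain a $(k,r)$-centred balanced separator $S$, forms the root bag $W \cup S$ (which is $(c^\ast+k,r)$-centred), and for each connected component $C$ of $G-S$ recurses with new boundary $W_C \subseteq (W \cup S) \cap N_G[C]$.

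The heart of the argument is a choice of $\mu$ which keeps the new boundaries $W_C$ in the $(c^\ast, r)$-centred regime. Viewing $W$ as a union of $c^\ast$ balls $N_G^r[\widehat w]$ for $\widehat w \in \widehat W$, the natural attempt is to take $\mu$ to be essentially the counting measure on $\widehat W \cup \widehat S$, together with a small uniform mass across $V(G)$ to drive termination. A balanced separator for this $\mu$ splits the centres roughly in half between the components of $G-S$, so each $W_C$ ought to be coverable by at most about $c^\ast/2$ of the inherited balls plus the $k$ new balls coming from $S$; taking $c^\ast \geq 2k$ would then close the induction.

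The main obstacle is that this halving is only honest at the level of the centres: a ball $N_G^r[\widehat w]$ whose centre lies in one component of $G-S$ can still intersect another component, because the radii are positive. Consequently, the inherited cover for $W_C$ may be forced to retain balls whose centres lie in sibling components, destroying the halving. Overcoming this seems to require either re-centring each inherited ball in its new component (which replaces $r$ by $2r$ and compounds across the unbounded depth of the recursion), or an argument that uses $S$ itself to block a ball from reaching across it. The latter is false in general, but becomes tractable in settings where small-radius balls cannot simultaneously interact with many components---precisely the role played by $K_{t,t}$-induced-subgraph-freeness and the $r=1$ hypothesis in the authors' main result. Settling the conjecture in full seems to require a genuinely new idea for controlling this compounding, perhaps exploiting the quasi-isometric flexibility afforded by \cref{CoarseTreewidthCharacterisation} to reshape the host graph during the recursion.
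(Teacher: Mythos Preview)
The statement you are addressing, \cref{MainConjecture}, is an open conjecture; the paper does not prove it, and your proposal does not either---as you yourself concede in the final paragraph. You have correctly isolated the obstruction in the naive recursion: a ball $N_G^r[\widehat w]$ centred on one side of the separator $S$ can still meet sibling components, so halving the centres does not halve the number of balls needed to cover each new boundary $W_C$, and re-centring within components compounds the radius across an unbounded recursion depth. This is exactly why the conjecture remains open, so there is no proof in the paper against which to compare your attempt.

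One remark on the special case the paper does prove (\cref{MainTheorem}): the argument there does not follow the centre-counting scheme you outline. The induction is on $\alpha(G)$, not on $|V(G)\setminus W|$, and the decisive preliminary step is to delete, via \cref{SmallAlphaNeighbours}, the bounded-independence set $Z$ of vertices whose closed neighbourhoods capture a $\tfrac{1}{20k}$-fraction of a maximum independent set. Only after this pruning is a $(k,1)$-centred balanced separator taken in $G-Z$, and the pruning guarantees that the separator meets any maximum independent set in at most a $\tfrac{1}{10}$-fraction, so $\alpha$ genuinely drops on every piece. The boundary carried through the recursion is a set $X$ controlled by $\alpha(X)\leq 10dk$, not by a count of covering centres; the bags end up $(20dk,2)$-centred simply because a set of bounded independence number is dominated by any maximum independent set in it, so $N[X]$ lies in that many radius-$2$ balls. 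This change of currency from centres to independence number is precisely what the $K_{t,t}$-free hypothesis (together with \cref{ObsK2k}) purchases, and it is not present in your sketch.
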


The same argument that shows $\sep(G)\leq \tw(G)+1$ also implies that every graph that has a $(k,r)$-centred tree decomposition admits $(k,r)$-balanced separators. Therefore, this conjecture, if true, would provide a balanced separator characterisation for coarse treewidth.

In this paper, we verify \cref{MainConjecture} for hereditary $K_{t,t}$‑induced-subgraph-free graph classes in the $r=1$ case.

\begin{thm}\label{MainTheorem}
    For all $k,t\in \NN$, there exists $k'\in \NN$ such that the following holds: Let $G$ be a $K_{t,t}$-free graph such that every induced subgraph of $G$ admits $(k,1)$-balanced separators. Then $G$ has a $(k',2)$-centred tree-decomposition.
\end{thm}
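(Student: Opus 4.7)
The plan is to adapt the classical proof of \cref{BalancedSep} by \citet{robertson1986algorithmic} to the coarse setting, building the tree-decomposition by a recursive balanced-separator argument in which ``boundaries'' are represented by centre sets of bounded size rather than by small sets of vertices.

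At each recursion node I would maintain an induced subgraph $G_i \subseteq G$ together with a ``boundary centre set'' $\widehat{X}_i \subseteq V(G_i)$ of size at most $c = c(k,t)$, and an inherited boundary $X_i \subseteq N_G^2[\widehat{X}_i] \cap V(G_i)$ that must appear in the current bag. Applying the hypothesis to $G_i$ with weight function $\mathbf{1}_{\widehat{X}_i}$, I would obtain a $(k,1)$-centred balanced separator $S_i \subseteq N_{G_i}^1[\widehat{S}_i]$ with $|\widehat{S}_i| \leq k$; the current bag $X_i \cup S_i \subseteq N_G^2[\widehat{X}_i \cup \widehat{S}_i]$ is then $(c+k,2)$-centred. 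For each component $C$ of $G_i - S_i$ I would recurse on $G_i[V(C) \cup S_i]$ with new centre set $\widehat{X}_{i+1}^{(C)} := (\widehat{X}_i \cap V(C)) \cup \widehat{S}_i$. Balancedness yields $|\widehat{X}_i \cap V(C)| \leq |\widehat{X}_i|/2$, so the size invariant is preserved whenever $c \geq 2k$.

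The main obstacle is verifying that the new boundary $X_{i+1}^{(C)} = (X_i \cap V(C)) \cup S_i$ still lies in $N_G^2[\widehat{X}_{i+1}^{(C)}]$. For centres $\widehat{b} \in \widehat{X}_i \cap V(C)$ this is immediate, and $S_i$ is covered by $\widehat{S}_i$ at radius $1$. The delicate cases are a centre $\widehat{b} \in \widehat{X}_i \cap S_i$, whose ball $N_G^2[\widehat{b}]$ sits only inside $N_G^3[\widehat{S}_i]$, and a centre $\widehat{b} \in \widehat{X}_i \cap V(C')$ with $C' \neq C$: although $\widehat{b}$ has no edge into $V(C)$ inside $G_i$, the ball $N_G^2[\widehat{b}]$ can still reach $V(C)$ via length-two paths through vertices of $V(G) \setminus V(G_i)$ deleted at ancestor recursion nodes. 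This is where I expect the $K_{t,t}$-free hypothesis to enter: every deleted vertex lies in the closed neighbourhood of some ancestor centre set of size at most $k$, so the troublesome vertices $v \in V(C) \cap X_i$ together with the ancestor centres must carry a dense bipartite substructure. A K\H{o}v\'ari--S\'os--Tur\'an-style argument, adapted to induced $K_{t,t}$-freeness, should then bound the number of troublesome vertices per ancestor level by a function $f(k,t)$, which can be absorbed as additional explicit centres into $\widehat{X}_{i+1}^{(C)}$ at the cost of enlarging $c$ to $c(k,t)$. I expect the hardest step to be making this quantitative argument work, since induced $K_{t,t}$-freeness is considerably weaker than ordinary $K_{t,t}$-freeness, the troublesome vertex set is itself unbounded, and the counting must be done relative to the (bounded) ancestor centre sets via a careful charging argument, possibly assisted by a weight function in the balanced-separator step biased to push troublesome vertices into $S_i$ rather than into the components.
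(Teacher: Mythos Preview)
Your recursion maintains the wrong invariant. Tracking a bounded-size centre set $\widehat{X}_i$ and asking that the boundary $X_i\subseteq N_G^2[\widehat{X}_i]$ does not survive the step: when a centre $\widehat b\in\widehat{X}_i$ lands in $S_i$ or in another component, the vertices of $N_G^2[\widehat b]\cap V(C)$ are only guaranteed to lie in $N_G^3[\widehat{S}_i]$, not $N_G^2$, and there can be \emph{unboundedly many} of them even in a $K_{t,t}$-free graph (a single centre can have arbitrarily large $2$-neighbourhood). Your proposed K\H{o}v\'ari--S\'os--Tur\'an patch cannot rescue this, because you would need to bound the number of troublesome \emph{vertices}, not the number of ancestor centre sets, and induced $K_{t,t}$-freeness gives no such bound. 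The ``charging to ancestor centres'' idea also breaks down: the recursion has unbounded depth, so accumulating even one extra centre per ancestor level blows up $c$. Finally, you have no termination measure; nothing in your setup makes the recursive instances strictly smaller.

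The paper's proof replaces your invariant by the right one: it carries a boundary set $X$ with bounded \emph{independence number} $\alpha(X)$ rather than a bounded centre set. A set with $\alpha(X)\le m$ is automatically $(m,1)$-centred (a maximum independent set dominates it), so $N[X]$ is $(m,2)$-centred --- this is exactly why the output radius is $2$. The $K_{t,t}$-freeness enters through a single external lemma (\cref{SmallAlphaNeighbours} from~\cite{CCLMS2025Walls}): in a $\{K_\gamma^{(2)},K_{t,t}\}$-free graph, the set $Z$ of vertices whose neighbourhood meets $Y$ in large independence number itself has bounded $\alpha$. After deleting $Z$ one balances a \emph{maximum independent set} of $G-Z$ (and of $X$), which forces every component to have strictly smaller $\alpha$ while $\alpha$ of the new boundary stays bounded. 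Induction is then on $\alpha(G)$, giving termination for free. None of these ingredients --- the $\alpha$-invariant, the $K_\gamma^{(2)}$-freeness (from \cref{ObsK2k}), or the CCLMS lemma --- appear in your plan, and without them the argument does not close.
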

Together with \cref{CoarseTreewidthCharacterisation}, this gives a natural sufficient condition for a graph to be quasi-isometric to a graph with bounded treewidth.

Note that our proof for \cref{MainTheorem} gives the following stronger property: the radius-$2$ neighbourhoods can be taken with respect to the subgraph induced by the corresponding bag of the tree-decomposition. 

To further motivate \cref{MainTheorem}, we place it in the broader context of recent developments concerning induced minors. There has been growing interest in understanding which hereditary graph classes admit $(k,1)$-balanced separators \cite{CGHLS2024Evenhole,CHLS2024ThreePath,CCLMS2025Walls,Gartland2024MWIS,GartlandLokshtanov2020,Bacso2019Subexponential}. This line of work is driven by the goal of designing quasi-polynomial time algorithms for maximum independent set, since the existence of such separators are useful for constructing such algorithms.

Note that if a graph has bounded maximum degree and admit $(k,1)$-balanced separators, then it has bounded separation number, and hence has bounded treewidth. Consequently, a necessary condition for a hereditary graph class to admit $(k,1)$-balanced is that it needs to exclude a large grid as an induced minor. Gartland and Lokastov~\cite{Gartland23} conjecture that this is, in fact, the only obstruction:

\begin{conj}[\cite{Gartland23}]\label{InducedMinorSep}
    For every planar graph $H$, there exists $k\in \NN$ such that every $H$-induced-minor-free graph $G$ admits $(k,1)$-balanced separator.
\end{conj}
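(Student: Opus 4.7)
My plan is to first reduce to the case where $H$ is a wall $W_n$, using the fact that every planar graph is an induced minor of a sufficiently large wall. So fix $n$, fix a $W_n$-induced-minor-free graph $G$, and fix a vertex-weight function $\mu\colon V(G)\to \RR_{\geq 0}$; the goal is to produce a $(k,1)$-centred balanced separator for $\mu$, where $k = k(n)$.

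I would split into cases according to whether $G$ contains a copy of $K_{t,t}$ as a subgraph, for $t=t(n)$ chosen appropriately. In the $K_{t,t}$-subgraph-free case, I would invoke recent structural results of the ``induced subgraphs and tree-decompositions'' programme, which show that $K_{t,t}$-subgraph-free graphs excluding a subdivided wall as an induced subgraph have bounded treewidth. Since $G$ excludes $W_n$ as an induced minor, it in particular excludes a suitable subdivision of $W_n$ as an induced subgraph, so $\tw(G)$ is bounded by a function of $n$ and $t$. \cref{BalancedSep} then delivers a bounded-size vertex separator, which is a fortiori $(k,1)$-centred. In the complementary case, I would iteratively peel off closed neighbourhoods of ``heavy'' vertices: whenever some $v$ satisfies $\mu(N[v]) \geq \mu(V(G))/k$, include $N[v]$ as a radius-$1$ ball in the tentative separator and recurse on the residual weighted graph. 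Iterating while a heavy component remains, one hopes to collect only a bounded number of such balls and then finish each surviving component via the first case, since on a graph with no very heavy neighbourhood the relevant substructures should be forced to be locally sparse.

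The main obstacle is the second case. Peeling closed neighbourhoods is delicate because excluding a planar induced minor is not obviously preserved in any useful way on the residual weighted graph, and a single $K_{t,t}$ subgraph can conceal a separator whose vertices are not all localisable into a bounded number of radius-$1$ balls. I expect a complete proof will require a genuinely new induced-grid-minor theorem suited to the coarse setting, together with a covering lemma that converts dense substructures in $H$-induced-minor-free graphs into a bounded collection of radius-$1$ balls. The absence of such ingredients is likely why the conjecture remains open in general; the present paper's \cref{MainTheorem} resolves the converse direction (from balanced separators to tree-decompositions) for $K_{t,t}$-free graphs, and a proof of \cref{InducedMinorSep} would complement it to give a complete structural picture.
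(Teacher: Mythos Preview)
The statement you are addressing is \cref{InducedMinorSep}, which the paper presents as an \emph{open conjecture} attributed to Gartland and Lokshtanov. The paper does not prove it; it only records that the conjecture has been verified in a handful of special cases ($P_t$-free, even-hole-free, three-path-configuration-free, and certain line-graph exclusions). Consequently there is no ``paper's own proof'' to compare your proposal against.

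Your proposal is not a proof either, and you correctly identify this yourself. Two concrete gaps are worth naming. First, in your $K_{t,t}$-subgraph-free case you appeal to ``recent structural results'' asserting that $K_{t,t}$-subgraph-free graphs excluding an induced subdivided wall have bounded treewidth; no such theorem is currently available in the generality you need. The known induced grid theorems (e.g.\ Korhonen's) require bounded maximum degree, not merely the absence of a $K_{t,t}$ subgraph, and extending them is itself a major open problem closely related to \cref{InducedMinorTreewidth}. Second, your complementary case --- iteratively peeling heavy closed neighbourhoods --- has no mechanism to terminate in a bounded number of steps or to guarantee that the residual pieces fall into the first case; you acknowledge this explicitly. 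So what you have written is a plausible heuristic outline of where a proof might eventually be found, together with an honest assessment of why the conjecture is still open, rather than a proof or even a proof sketch.

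A minor point: your closing sentence describes \cref{MainTheorem} as ``the converse direction'' of \cref{InducedMinorSep}. That is not quite the relationship. \cref{MainTheorem} shows that (for $K_{t,t}$-free graphs) hereditary $(k,1)$-balanced separators yield a $(k',2)$-centred tree-decomposition; combined with \cref{InducedMinorSep} it would give \cref{InducedMinorTreewidth} for $K_{t,t}$-free graphs, which is the ``bridge'' the paper advertises, but it is not a converse.
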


This conjecture has been confirmed in several cases: $P_t$-free graphs \cite{Bacso2019Subexponential}; even-hole-free graphs \cite{CGHLS2024Evenhole}; three-path-configuration-free graphs \cite{CHLS2024ThreePath}; and graphs that exclude the line graph of subdivisions of a wall and a subdivided claw as induced subgraphs \cite{CCLMS2025Walls}.

Related to \cref{InducedMinorSep} is the \emph{Coarse Grid Minor Conjecture} due to \citet{georgakopoulos2023graph}.\footnote{Note that \cite{georgakopoulos2023graph} states their conjecture in terms of forbidden fat-minors, which is more general than forbidden induced minors.} This conjecture is one of the most important open problems in coarse graph theory. 

\begin{conj}[\cite{georgakopoulos2023graph}]\label{InducedMinorTreewidth}
    For every planar graph $H$, there exist $k,q\in \NN$ such that every $H$-induced-minor-free is $q$-quasi-isometric to a graph with treewidth at most $k$.
\end{conj}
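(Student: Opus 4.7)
The plan is to derive \cref{InducedMinorTreewidth} as a corollary of \cref{MainTheorem} and \cref{CoarseTreewidthCharacterisation}, conditional on \cref{InducedMinorSep}. Fix a planar graph $H$ and let $G$ be $H$-induced-minor-free. Since being $H$-induced-minor-free is closed under taking induced subgraphs, \cref{InducedMinorSep} would supply a constant $k=k(H)$ such that every induced subgraph of $G$ admits a $(k,1)$-balanced separator. If in addition $G$ is $K_{t,t}$-free for some $t=t(H)$, then \cref{MainTheorem} furnishes a $(k',2)$-centred tree-decomposition of $G$, and \cref{CoarseTreewidthCharacterisation} promotes this to a $q$-quasi-isometry from $G$ to a graph of treewidth at most $k'$, as required.

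The substantive remaining step is to remove the $K_{t,t}$-free side condition. One cannot expect $H$-induced-minor-freeness to imply $K_{t,t}$-freeness outright: for instance, $K_{t,t}$ itself is $P_4$-induced-minor-free for every $t$, because any induced-minor model of $P_4$ in $K_{t,t}$ would force two adjacent branch sets to be singleton vertices on the same side of the bipartition. However, $K_{t,t}$ has diameter $2$, so any induced $K_{t,t}$ in $G$ lies within a small-diameter region and should be absorbed by the quasi-isometry. Concretely, I would try to show that an $H$-induced-minor-free graph $G$ admits a quasi-isometric modification $\widetilde{G}$, obtained by contracting each maximal dense region to a single vertex, that remains $H$-induced-minor-free and is $K_{t,t}$-free for $t=t(H)$, thereby reducing the problem to the setting in which \cref{MainTheorem} applies directly. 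Bounding the diameter of each contracted region in terms of $H$ is the crucial combinatorial ingredient here.

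Two obstacles stand out. The first is \cref{InducedMinorSep}, which is itself a major open problem: the present strategy is therefore genuinely conditional, not a stand-alone proof. The second is the $K_{t,t}$-elimination step above: contracting dense substructures may create new induced minors or inflate distances, and controlling both effects simultaneously is delicate. Nevertheless, for hereditary classes where \cref{InducedMinorSep} is already known---notably $P_t$-free graphs~\cite{Bacso2019Subexponential}, even-hole-free graphs~\cite{CGHLS2024Evenhole}, three-path-configuration-free graphs~\cite{CHLS2024ThreePath}, and the wall-free classes of~\citet{CCLMS2025Walls}---this approach reduces \cref{InducedMinorTreewidth} for the corresponding $H$ to the purely combinatorial $K_{t,t}$-reduction, which appears significantly more tractable than the full conjecture.
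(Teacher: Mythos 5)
This statement is the Coarse Grid Minor Conjecture of \citet{georgakopoulos2023graph}; it is an open problem, and the paper does not prove it (nor claim to). What the paper actually establishes is the implication stated just after \cref{InducedMinorTreewidth}: \emph{if} \cref{InducedMinorSep} holds, then \cref{InducedMinorTreewidth} holds for $K_{t,t}$-free graphs, via \cref{MainTheorem} combined with \cref{CoarseTreewidthCharacterisation}. Your first paragraph reproduces exactly this bridge, correctly using that $H$-induced-minor-freeness is hereditary so that \cref{InducedMinorSep} applies to every induced subgraph of $G$. Up to that point you are aligned with the paper.

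The remainder is not a proof, and you are right to flag both obstacles yourself, but it is worth being precise about why the second one is a genuine gap and not merely a technicality. Your observation that $K_{t,t}$ is $P_4$-induced-minor-free for every $t$ shows that no uniform $t=t(H)$ can exist for general planar $H$, so the $K_{t,t}$-freeness hypothesis of \cref{MainTheorem} cannot be discharged for free; some reduction is unavoidable. But the proposed reduction --- contracting ``maximal dense regions'' to obtain a quasi-isometric $\widetilde{G}$ --- leaves every essential property unverified: (i) dense regions (large bicliques) need not have bounded diameter as subsets of $G$ a priori, and overlapping bicliques can chain into regions of unbounded diameter, so the contraction need not be a quasi-isometry; (ii) contraction can create new induced minors, so $\widetilde{G}$ need not be $H$-induced-minor-free, which is exactly what you need to invoke \cref{InducedMinorSep} on $\widetilde{G}$ and its induced subgraphs; and (iii) \cref{MainTheorem} requires that \emph{every induced subgraph} of $\widetilde{G}$ admits $(k,1)$-balanced separators, a hereditary condition that a global contraction argument does not obviously supply. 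In short: the conditional bridge in your first paragraph is exactly the paper's contribution, and everything beyond it is an open problem, not a proof.
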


Since excluding a graph as an induced minor is closed under taking induced subgraphs, a consequence of \cref{MainTheorem} is that \cref{InducedMinorSep} implies \cref{InducedMinorTreewidth} for $K_{t,t}$-free graphs. Thus, our result provides a bridge between these two important conjectures concerning the structure of graphs that exclude a planar graph as an induced minor.

\section{Preliminaries}\label{SectionPrelim}
All graphs in this paper are simple and finite. Undefined terms and standard definitions can be found in \citet{diestel2017graphtheory}.

Let $G$ and $H$ be graphs. The $2$-subdivision of $H$, denoted $H^{(2)}$, is the graph obtained from $H$ by replacing each edge with a path of length $3$. We say that $H$ is an~\defn{induced subgraph} of $G$ if $H$ can be obtained from $G$ by deleting vertices. We say that $G$ is \defn{$H$-free} if $G$ does not contain an induced subgraph isomorphic to $H$. A class of graphs $\GG$ is \defn{hereditary} if it is closed under taking induced subgraphs. 

For a set $S \subseteq V(G)$, let $G[S]$ denote the graph obtained by removing from $G$ all the vertices that are not in $S$. We write $G-S$ as short-hand for $G[V(G)\setminus S]$. We say that $H$ is an \defn{induced minor} of $G$ if $H$ is isomorphic to a~graph that can be obtained from an induced subgraph of $G$ by contracting edges. We say that $G$ is \defn{$H$-induced-minor-free} if $H$ is not an induced-minor of $G$. 

Two sets $X,Y\subseteq V(G)$ are \defn{anti-complete} if they are disjoint and there is no edge in $G$ with one end-point in $X$ and the other in $Y$. An \defn{independent set} $S\subseteq V(G)$ is a set of vertices such that no two vertices in $S$ are adjacent. The \defn{independence number} $\alpha(G)$ of $G$ is the size of the largest independent set in $G$. For a set $X\subseteq V(G)$, we may abuse notation and write $\alpha(X)$ for $\alpha(G[X])$.

The \defn{distance} $\dist_G(u,v)$ between $u$ and $v$ in $G$ is the length of the shortest path connecting them, or infinite otherwise. For $r\in \NN$ and set $\hat{S}\subseteq V(G)$, let $N^r[\hat{S}]$ denote the set of vertices in $G$ at distance at most $r$ from $S$. We may drop the subscript $G$ when the graph is clear from context, and we may write $N[\hat{S}]$ instead of $N^1[\hat{S}]$. 

For $q \in \NN$, a \defn{$q$-quasi-isometry} of $G$ into a graph $H$ is a map $\phi \colon V(G) \to V(H)$ such that, for every $u,v\in V(G)$,
\begin{equation*}
    q^{-1} \cdot \dist_G(u,v) - q \leq \dist_{H}(\phi(u), \phi(v))\leq q \cdot \dist_G(u,v) + q,
\end{equation*}
and, for every $x \in V(H)$, there exists a vertex $v \in V(G)$ such that $\dist_{H}(x,\phi(v)) \leq q$. If such a map exists, then we say that $G$ is \defn{$q$-quasi-isometric} to $H$. 

\section{Proof}

We make no attempt to optimise the constants in our bounds. We need the following lemma from \citet{CCLMS2025Walls}.

\begin{lem}[\cite{CCLMS2025Walls}]\label{SmallAlphaNeighbours}
    Let $C,\gamma,t\in \NN$ be such that $C,\gamma \geq 2$, and let $G$ be a $\{K_{\gamma}^{(2)},K_{t,t}\}$-free graph. Let $Y\subseteq V(G)$. Define
    $$Z=\{v\in V(G)\colon \alpha(N[v]\cap Y)\geq \frac{\alpha(Y)}{C}\}.$$
    Then $\min\{\alpha(Y),\alpha(Z)\}\leq (512C)^{\gamma^{2t}}$.
\end{lem}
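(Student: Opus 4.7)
I would prove the contrapositive: assume that $\alpha(Y) > (512C)^{\gamma^{2t}}$ and $\alpha(Z) > (512C)^{\gamma^{2t}}$, and construct an induced $K_\gamma^{(2)}$, contradicting the $K_\gamma^{(2)}$-freeness of $G$.

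First, extract an independent set $I_0 \subseteq Z$ of size exceeding the threshold, and for each $v \in I_0$ fix a witnessing independent set $S_v \subseteq N[v] \cap Y$ of size at least $\alpha(Y)/C$. The branch vertices $v_1, \dots, v_\gamma$ of the target subdivision will be chosen from $I_0$, and for each pair $i < j$ the length-$3$ path between $v_i$ and $v_j$ in $K_\gamma^{(2)}$ will have its two internal vertices $p_{ij} \in S_{v_i}$ and $q_{ij} \in S_{v_j}$, joined by the edge $p_{ij}q_{ij}$.

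The branch vertices are selected one at a time, while simultaneously trimming $I_0$ and the already-committed $S_v$'s. Two interleaved cleanup tasks drive the construction, both powered by $K_{t,t}$-freeness. The first is an \emph{anti-adjacency cleanup}: after committing $v_k$ and the internal vertices $p_{ik}, q_{ik}$ of its incident paths, one restricts the remaining candidates $I_{k} \subseteq I_{k-1}\setminus\{v_k\}$ and each previous $S_{v_i}$ to vertices non-adjacent to the newly-chosen vertices. Since $G$ is $K_{t,t}$-free, fewer than $t$ vertices can share $\geq t$ neighbours inside any fixed independent set, so a $t$-fold pigeonhole discards only a bounded multiplicative factor per stage. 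The second is an \emph{edge-existence step}: for each pair $i<j$, the required edge $p_{ij}q_{ij}$ must be found between the trimmed sides of $S_{v_i}$ and $S_{v_j}$. If no such edge exists, the union $S_{v_i}\cup S_{v_j}$ is an independent set in $Y$, and iterating this observation across many candidate branch pairs would eventually produce an independent set in $Y$ larger than $\alpha(Y)$, a direct contradiction; hence after a Ramsey-type refinement on the ``has-edge-between-$S$-sets'' graph on $I_0$, edges can be guaranteed for every pair.

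The main obstacle lies in the bookkeeping and in coordinating the two cleanups so that they remain compatible across all $\gamma$ branch vertices and all $\binom{\gamma}{2}$ internal paths at once. Each cleanup step shrinks candidate sets by a factor depending on $C$ and $t$, and these losses must be absorbed $\gamma$ times (once per branch vertex), with up to $t$-fold pigeonhole refinements on each of the two sides of every path; compounded multiplicatively this produces the factor $\gamma^{2t}$ in the exponent. Provided the initial thresholds beat $(512C)^{\gamma^{2t}}$, enough independence survives in the trimmed candidate sets after all stages to certify the forbidden $K_\gamma^{(2)}$, completing the contradiction.
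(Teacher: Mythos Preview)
The paper does not prove this lemma at all; it is imported verbatim from \cite{CCLMS2025Walls} and used as a black box, so there is no in-paper argument to compare your sketch against.

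That said, your outline is the natural strategy and is broadly what the cited source does. Two places deserve more care than the sketch gives them. First, in the edge-existence step, ``no edge between $S_{v_i}$ and $S_{v_j}$'' makes $S_{v_i}\cup S_{v_j}$ independent but does not make it large: the two sets may overlap heavily, so iterating to exceed $\alpha(Y)$ is not automatic. You need a preliminary pass (again using $K_{t,t}$-freeness, since a vertex lying in many $S_v$'s is a common neighbour of many independent $v$'s) to force the $S_v$'s to be nearly disjoint before the union argument works. Second, the non-adjacency requirements on the internal vertices are more numerous than the sketch suggests: each $p_{ij}$ must be non-adjacent not only to the foreign branch vertices $v_k$ but also to every other internal vertex $p_{k\ell},q_{k\ell}$ with $\{k,\ell\}\neq\{i,j\}$ (except its partner $q_{ij}$). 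Maintaining large candidate pools through all $\binom{\gamma}{2}$ selections while respecting these constraints is where the real bookkeeping lies, and it is what produces the precise exponent $\gamma^{2t}$ and the constant $512$; your sketch gestures at this but does not derive it. None of this is a fatal gap --- the approach is sound --- but what you have written is an outline, not a proof, and the details are exactly the content of the cited lemma.
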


By assumption, the graph $G$ in \cref{MainTheorem} is $K_{t,t}$-free. The next lemma shows that $G$ is also $K_{2k+2}^{(2)}$-free, which will allow us to apply \cref{SmallAlphaNeighbours}.

\begin{lem}\label{ObsK2k}
    For every $k\in \NN$, the graph $K_{2k+2}^{(2)}$ does not admit balanced $(k,1)$-balanced separators.
\end{lem}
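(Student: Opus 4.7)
The plan is to exhibit a single vertex weighting on $K_{2k+2}^{(2)}$ for which no $(k,1)$-centred set is a balanced separator. Write $B$ for the set of $2k+2$ \emph{branch vertices} (the original vertices of $K_{2k+2}$), and assign each vertex of $B$ weight $1$ and each subdivision vertex weight $0$. Since the total weight is $2k+2$, any balanced separator must leave every component of $G-S$ with at most $k+1$ branch vertices. Fix an arbitrary $\widehat{S}\subseteq V(K_{2k+2}^{(2)})$ with $|\widehat{S}|\leq k$ and put $S:=N[\widehat{S}]$; I will show this $S$ is not balanced.

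The first step is the observation that $|N[\widehat{v}]\cap B|\leq 1$ for every vertex $\widehat{v}$: a branch vertex has only subdivision neighbours, and a subdivision vertex has exactly one branch neighbour. Summing over $\widehat{S}$ yields $|S\cap B|\leq k$, so at least $k+2$ branch vertices survive in $G-S$.

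The main step is to show that any two surviving branch vertices $u',w'\in B\setminus S$ are still joined in $G-S$ by their unique length-$3$ path $u'-x-y-w'$. Suppose $x\in S$; then some $\widehat{v}\in\widehat{S}$ lies in $N[x]=\{u',x,y\}$. The three possibilities $\widehat{v}=u'$, $\widehat{v}=x$, $\widehat{v}=y$ respectively force $u'\in S$, $u'\in N[x]\subseteq S$, or $w'\in N[y]\subseteq S$, each contradicting the survival of $u'$ or $w'$. The symmetric argument rules out $y\in S$, so the whole path survives. Consequently the $\geq k+2$ surviving branch vertices lie in a single component of $G-S$, whose weight exceeds $\tfrac12|B|$, and $S$ fails to be balanced.

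I do not expect any real obstacle; the proof rests on the clean structural fact that a closed $1$-ball in a $2$-subdivision meets $B$ in at most one vertex, so a $(k,1)$-centred set can at best simulate the removal of $k$ vertices from $K_{2k+2}$, which cannot disconnect the remaining clique on $k+2$ vertices. The only care needed is in the case analysis above, to verify that a subdivision internal vertex cannot be placed into $S$ without dragging one of its two adjacent branch vertices along with it.
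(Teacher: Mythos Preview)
Your proof is correct and follows essentially the same approach as the paper: weight the branch vertices, observe that each radius-$1$ ball contains at most one branch vertex so $|S\cap B|\leq k$, and then show that the length-$3$ path between any two surviving branch vertices must also survive (since a subdivision vertex landing in $S$ forces one of its branch endpoints into $S$). Your case analysis on the location of $\widehat{v}$ spells out in more detail what the paper compresses into a single sentence, but the argument is the same.
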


\begin{proof}
Let $X$ be the set of high-degree vertices in $K_{2k+2}^{(2)}$. Then $|X|=2k+2$. Suppose, for contradiction, that there is a set $\hat{S}$ of at most $k$ vertices in $K_{2k+2}^{(2)}$ whose closed neighbourhood $S=N[\hat{S}]$ is a balanced separator for $X$. Let $X'=X-(X\cap S)$. Since no pair of vertices in $X$ lie in a common ball of radius $1$ in $K_{2k+2}^{(2)}$ (as they are pairwise distance-$3$ apart), it follows that $|X'|\geq 2k+2-k = k+2$. Choose distinct $x,y\in X'$ and let $(x,w_1,w_2,y)$ be the path of length $3$ joining them in $K_{2k+2}^{(2)}$. If either $w_1$ or $w_2$ were in $S$, then $x$ or $y$ would also be in $S$, contradicting their membership in $X'$. Thus, $x$ and $y$ are in the same component $C$ of $K_{2k+2}^{(2)}-S$. Since this argument holds for every pair of vertices in $X'$, it follows that $X'\subseteq V(C)$ and so $C$ contains at least $k+2 > |X|/2$ vertices from $X$, contradicting the assumption that $S$ is a balanced separator for $X$.
\end{proof}

The following is our main technical result, which immediately implies \cref{MainTheorem}. 

\begin{lem}
    Let $k,t\in \NN$ and define $d=(512\cdot 20k)^{{(2k+2)}^{2t}}.$ Let $G$ be a $K_{t,t}$-free graph such that every induced subgraph of $G$ admits $(k,1)$-balanced separators. Then, for every $X\subseteq V(G)$ with $\alpha(X)\leq 10dk$, there exists a $(20dk,2)$-centred tree-decomposition of $G$ which has a bag that contains $N[X]$.
\end{lem}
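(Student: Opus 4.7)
The plan is to proceed by induction on $|V(G)|$, producing a tree-decomposition whose root bag $B$ contains $N[X]$ and whose subtrees come from recursing on each connected component of $G-B$. The base case is $V(G) \subseteq N[X]$, handled by the single-bag tree-decomposition with $B = V(G)$: a maximal independent subset $D \subseteq X$ is a dominating set of $X$ with $|D| \leq \alpha(X) \leq 10dk$, witnessing $N[X] \subseteq N_G^2[D]$. For the inductive step, by \cref{ObsK2k} and the hypothesis that every induced subgraph of $G$ admits $(k,1)$-balanced separators, the graph $G$ is $K_{2k+2}^{(2)}$-free; together with $K_{t,t}$-freeness this makes \cref{SmallAlphaNeighbours} applicable with $\gamma = 2k+2$.

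To construct the bag, I would apply \cref{SmallAlphaNeighbours} with $Y = X$ and parameter $C = 20k$ to obtain $Z = \{v : \alpha(N_G[v] \cap X) \geq \alpha(X)/(20k)\}$ satisfying $\min\{\alpha(X), \alpha(Z)\} \leq d$. Take $D_X$ to be a maximal independent subset of $G[X]$, so $|D_X| \leq \alpha(X) \leq 10dk$ and $D_X$ dominates $X$; take $D_Z$ to be a maximal independent subset of $G[Z]$ when $\alpha(Z) \leq d$, and set $D_Z = \emptyset$ otherwise (in which case $\alpha(X) \leq d$). In either case $|D_Z| \leq d$. Set $\hat{S} := D_X \cup D_Z$: then $|\hat{S}| \leq 10dk + d \leq 20dk$, and $B := N_G^2[\hat{S}] \supseteq N[X] \cup N[Z]$. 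Consequently, for every connected component $C$ of $G - B$, $V(C)$ is disjoint from $N_G[X \cup Z]$.

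The recursive step applies the inductive hypothesis to $G_C := G[V(C) \cup N_G(C)]$ with $X_C := N_G(C)$, gluing the child tree-decomposition at $B$ via the shared vertex set $N_G(C) \subseteq B$. The strict decrease $|V(G_C)| < |V(G)|$ holds because $V(C) \cap N[X] = \emptyset$ forces $X \cap V(G_C) = \emptyset$. The main obstacle is verifying the inductive hypothesis $\alpha(N_G(C)) \leq 10dk$ for every component $C$. The structural consequence of the construction is that $N_G(C) \cap Z = \emptyset$: any $v \in Z$ with a neighbor in $V(C)$ would place that neighbor in $N[Z]$, contradicting $V(C) \cap N[Z] = \emptyset$. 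Hence every $v \in N_G(C)$ satisfies $\alpha(N_G[v] \cap X) < \alpha(X)/(20k)$---each vertex of $N_G(C)$ sees only a small independent portion of $X$. I would then combine this sharp local constraint with the $K_{t,t}$-free and $K_{2k+2}^{(2)}$-free hypotheses, possibly by applying \cref{SmallAlphaNeighbours} a second time to a hypothetical large independent set $I \subseteq N_G(C)$ in the role of $Y$, to derive a contradiction should $|I| > 10dk$. Establishing this $\alpha$-bound is the technical heart of the argument; the remaining steps are bookkeeping.
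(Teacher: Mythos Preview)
Your outline has a genuine gap at precisely the point you flag as ``the technical heart'': the bound $\alpha(N_G(C))\leq 10dk$ does not follow from the ingredients you have assembled, and in fact it is false in general. Notice that the only place you invoke the $(k,1)$-balanced-separator hypothesis is through \cref{ObsK2k} to obtain $K_{2k+2}^{(2)}$-freeness; nowhere do you use a separator to \emph{balance} anything. Consider $k,t\geq 2$ and a graph built as follows: a vertex $x$ with unique neighbour $y$; a large independent set $I$ with $y$ complete to $I$; a set $W$ matched to $I$ and joined into a path. This graph is $K_{2,2}$-free and has treewidth at most~$2$, so every induced subgraph admits $(k,1)$-balanced separators. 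With $X=\{x\}$ you get $Z=N[x]=\{x,y\}$, hence $\hat S=\{x\}$ and $B=N^2[x]=\{x,y\}\cup I$. The unique component $C$ of $G-B$ contains $W$, and $N_G(C)=I$, so $\alpha(N_G(C))=|I|$ is unbounded. Your proposed second application of \cref{SmallAlphaNeighbours} with $Y=I$ yields only $Z_I=\{y\}$ and $\alpha(Z_I)=1\leq d$, which is no contradiction.

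The paper's argument is structurally different: it inducts on $\alpha(G)$, first strips off $Z=Z_G\cup Z_X$ (the high-$\alpha$-neighbourhood vertices with respect to both $V(G)$ and $X$), and then \emph{inside $G'=G-Z$} applies the $(k,1)$-balanced-separator hypothesis twice---once to a maximum independent set of $G'$ and once to a maximum independent set of $X\cap V(G')$---to obtain a $(2k,1)$-centred set $S$ in $G'$. Because $Z$ has been removed, $S$ meets each independent set in only a small fraction, so every component $C$ of $G'-S$ has $\alpha(C)$ a constant fraction below $\alpha(G)$ (driving the induction) and $\alpha(C\cap X)$ bounded. The recursive call is on $G[V(C)\cup S\cup Z]$ with $X_C=(V(C\cup S)\cap X)\cup\hat S\cup Z$, whose independence number is controlled by these balance estimates together with $\alpha(Z)\leq 2d$ and $|\hat S|\leq 2k$. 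In short, the missing idea in your sketch is to actually deploy the balanced separator, after peeling off $Z$, to cut both $\alpha(G)$ and $\alpha(X)$ simultaneously.
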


\begin{proof}
    We proceed by induction on $\alpha(G)$. Since every induced subgraph of $G$ admits balanced $(k,1)$-separators, \cref{ObsK2k} implies that $G$ is $K_{2k+2}^{(2)}$-free. If $\alpha(G) \leq 20dk$, then taking the tree-decomposition of $G$ in which $V(G)$ is in a single bag satisfies the statement. Hence, we may assume that $\alpha(G) > 20dk$.

    If $\alpha(X) < 10dk$, then enlarge $X$ by adding vertices into it until $\alpha(X)=10dk$. Define 
    $$Z_G=\{v\in V(G)\colon \alpha(N[v])\geq \frac{1}{20k}\alpha(V(G))\}$$ 
    and 
    $$Z_X=\{v\in V(G)\colon \alpha(N[v]\cap X)\geq \frac{1}{20k}\alpha(X)\}.$$ 
    Let $Z=Z_G\cup Z_X$. Since both $\alpha(X)$ and $\alpha(G)$ are greater than $d$, we may apply \cref{SmallAlphaNeighbours} to conclude that $\alpha(Z)\leq 2d$. 
    
    Let $G'=G-Z$ and $X'=V(G')\cap X$. Then, for every vertex $v\in V(G')$, we have
    $$\alpha(N_{G'}[v]) <\frac{1}{20k}\alpha(G) \text{\quad and \quad} \alpha(N_{G'}[v]\cap X') < \frac{1}{20k}\alpha(X),$$
    otherwise $v$ would be in $Z$.
    Let $I_{G}$ be a maximum independent set of $G'$ and let $I_X$ be a maximum independent set of $X'$. Since $\alpha(Z)\leq 2d$, it follows that $|I_G|\geq \alpha(G)-2d$ and $|I_X|\geq \alpha(X)-2d$. Since $G'$ is an induced subgraph of $G$, there exist sets $\hat{S}_G,\hat{S}_X\subseteq V(G')$ with $\max\{|\hat{S}_G|,|\hat{S}_G|\}\leq k$ such that $S_G=N_{G'}[\hat{S}_G]$ is a balanced separator for $I_G$ and $S_X=N_{G'}[\hat{S}_X]$ is a balanced separator for $I_X$. Define $\hat{S}=\hat{S}_G \cup \hat{S}_X$ and $S = S_G\cup S_X$. Since every vertex in $G'$ is adjacent to fewer than $\frac{1}{20k}(|I_G|+2d)$ vertices from $I_G$ (and similarly for $I_X$), it follows that 
    $$|I_G\cap S|\leq \frac{1}{20k}(|I_G|+2d) \cdot 2k= \frac{1}{10}(|I_G|+2d)$$
    and
    $$|I_X\cap S|\leq \frac{1}{20k}(|I_X|+2d) \cdot 2k= \frac{1}{10}(|I_X|+2d).$$
    
    Let $C$ be a component of $G'-S$ and define $\widehat{C}=G'-(S\cup V(C))$. By the balancing properties of the separators,
    $$|V(C)\cap I_G|\leq \frac{1}{2} |I_G| \text{\quad and \quad} |V(C)\cap I_X|\leq \frac{1}{2} |I_X|.$$ 
    Therefore,
    $$|V(\widehat{C})\cap I_G|\geq |I_G|-\frac{1}{2} |I_G|-\frac{1}{10}(|I_G|+2d)=\frac{1}{5}(2|I_G|-d)$$
    
    and
    $$|V(\widehat{C})\cap I_X|\geq |I_X|-\frac{1}{2} |I_X|-\frac{1}{10}(|I_X|+2d)=\frac{1}{5}(2|I_X|-d).$$
    
    Since $V(C)$ and $V(\widehat{C})$ are anti-complete, we deduce that
    $$\alpha(C)\leq \alpha(G)-\alpha(\widehat{C})\leq \frac{1}{5}(3\alpha(G)+d) \text{\quad 
    and \quad}
    \alpha(C\cap X)\leq \alpha(X)-\alpha(\widehat{C}\cap X)\leq \frac{1}{5}(3\alpha(X)+d).$$
    
    Set $C'=G[V(C)\cup S \cup Z]$ and $X_C=(V(C\cup S)\cap X)\cup \hat{S} \cup Z$. Then
    $$\alpha(C')\leq \alpha(C)+\alpha(S)+\alpha(Z)\leq \frac{1}{5}(3\alpha(G)+d)+\frac{1}{10}\alpha(G)+2d<\alpha(G)$$
    
    and similarly,
    $$\alpha(X_C)\leq \alpha(C\cap X)+\alpha(S\cap X)+|\hat{S}|+\alpha(Z)\leq \frac{1}{5}(3\alpha(X)+d)+\frac{1}{10}\alpha(X)+2k+2d\leq 10dk.$$
    
    By the inductive hypothesis, the graph $C'$ admits a $(20dk,2)$-centred tree-decomposition $\mathcal{T}_{C'}$ such that $N_{C'}[X_C]$ is contained in a single bag. For each component $C$ of $G'-S$, add a leaf bag to the tree-decomposition $\mathcal{T}_{C'}$ of $C'$ containing $N_G[X]\cup S\cup Z$ adjacent to the bag containing $N_{C'}[X_C]$. Since $N_G[X]\cap V(C')\subseteq N_{C'}[X_C]$, this defines a tree-decomposition of $G[V(C')\cup N_G[X]]$. Since $\alpha(X)\leq 10dk$, the set $N_G[X]\cup S\cup Z$ is $(10dk+2d+2k,2)$-centred. For each component of $G'-S$, identify the new leaf bags that contain $N_G[X]\cup S\cup Z$ to obtain a tree-decomposition of $G$. Since the other bags do not change, it follows that the tree-decomposition is $(20dk,2)$-centred while having a bag that contains $N_G[X]$, as required.
\end{proof}

To conclude, one peculiar artefact of our proof is that we go from balanced separators consisting of balls of radius $1$ to bags in the tree-decomposition consisting of ball of radius $2$. We do not believe that this increase in radius is necessary, and as such, we conjecture that \cref{MainTheorem} can be strengthened so that the tree-decomposition is $(k',1)$-centred. Proving this would be a step towards characterising graphs that have a $(k,1)$-centred tree-decompositions, which would be of independent interest. 

\subsubsection*{Acknowledgement}
This work was completed at the 2025 Oberwolfach Graph Theory Workshop. Thanks to the organisers and participants for providing a stimulating work environment.
 
{
\fontsize{11pt}{12pt}
\selectfont
	
\hypersetup{linkcolor={red!70!black}}
\setlength{\parskip}{2pt plus 0.3ex minus 0.3ex}

\bibliographystyle{DavidNatbibStyle}
\bibliography{main.bbl}
}

\end{document}